\newcommand{\NN}{{\mathbb N}}
\newcommand{\ZZ}{{\mathbb Z}}
\newcommand{\QQ}{{\mathbb Q}}
\newcommand{\RR}{{\mathbb R}}
\newcommand{\PP}{{\mathbb P}}
\theoremstyle{definition}
\newtheorem{thm}{Theorem}
\newtheorem{lem}[thm]{Lemma}
\newtheorem{cor}[thm]{Corollary}
\newtheorem*{rem}{Remark}
\begin{document}

\title{Existence of primes in the interval \( [15x,16x] \) \\
-- An entirely elementary proof --}
\author{%
Hiroki Aoki\thanks{aoki\_hiroki\_math@nifty.com, Faculty of Science and Technology, Tokyo University of Science.}
\and
Riku Higa\thanks{6123702@ed.tus.ac.jp, Faculty of Science and Technology, Tokyo University of Science.}
\and
Ryosei Sugawara\thanks{Graduated from Faculty of Science and Technology, Tokyo University of Science.}
}

\maketitle

\begin{abstract}
In this paper,
we give a short and entirely elementary proof of
the proposition \lq\lq For any positive integer \( N \),
there exists a real number \( L \) such that
for any real number \( x \geqq L \),
there are at least \( N \) primes in the interval \( [kx, (k+1)x] \)''
for \( k \leqq 15 \).
Our proof is based on
the idea of the proof by Erd\"{o}s~\cite{Erd} for \( k=1 \)
and its improvement
by Hitotsumatsu~\cite{Hit} and by Sainose~\cite{Sai} for \( k=2 \).
In the case of \( k=3 \) and \( k=4 \), the method is very
similar to the case of \( k=2 \), however,
in the case of \( k \geqq 5 \), we need new idea to complete the proof.
\end{abstract}

\noindent
{\bf{Keywords:}} Prime numbers. \\
{\bf{Mathematics Subject Classification:}} 11A41, 11N05.

\section{Introduction}

Let \( k \) be a positive integer.
In this paper, we treat the proposition \( P(k) \):
\lq\lq For any positive integer \( N \),
there exists a positive integer \( L_{N,k} \) such that
for any real number \( x \geqq L_{N,k} \),
there are at least \( N \) primes in the interval \( [kx, (k+1)x] \).''
In the case of \( k=1 \), this is very near to an old question
\lq\lq Is there a prime number in the interval \([n, 2n] \) for
any \( n \in \NN \)?'', which was conjectured yes
by Joseph Bertrand in 1845 and proved by Pafnuty Chebyshev in 1852.
Hence this old question is called the {\bf{Bertrands postulate}}
or the {\bf{Bertrand-Chebyshev theorem}}.
Later, Srinivasa Ramanujan~\cite{Ram} gave a short proof
of Bertrand-Chebyshev theorem in 1919.
His proof includes that \( P(1) \) is true.
After then,
Jitsuro Nagura~\cite{Nag} proved \( P(k) \) for \( k \leqq 5 \) in 1952,
by improving Ramanujan's technique.
Today, since the Prime Number Theorem (PNT) has already been proved,
we know that \( P(k) \) is true for any \( k \in \NN \) (c.f.~\cite{HW}).
However, PNT is one of the deep theorem in number theory
and its proof is not short.
Thus, even recently, the work
to find more simple proof of \( P(k) \) is still in progress.
A remarkable proof of Bertrand-Chebyshev theorem
was given by Paul Erd\"{o}s~\cite{Erd} in 1932.
His proof is not only short but also very elementary enough that
usual high school students can understand.
Improving his technique,
Mohamed El Bachraoui~\cite{Bac} gave an elementary proof
of \( P(2) \) in 2006.
His improvement is quite useful for small \( k \).
Using his idea,
Andy Loo~\cite{Loo} proved \( P(3) \) in 2011 by very similar way
(but not entirely elementary), and then,
Kyle D. Balliet~\cite{Bal} proved \( P(k) \) for \( k \leqq 8 \) in 2015.
However, Balliet~\cite{Bal} remarked that his proof
is not entirely elementary for \( k \geqq 5 \).
In his proof for \( k \geqq 5 \), he used shape estimations of
some functions, which seems not to be elementary.

It does not seem very well known but some Japanese high school teachers
gave another amazing results on this proposition.
In 2011, Japanese mathematician Shin Hitotsumatsu~\cite{Hit} introduced
a very elementary and simplified proof of Bertrand-Chebyshev theorem
according to the idea of Erd\"{o}s on \lq\lq S\={u}ken Tsushin'',
which is a journal of educational practice of mathematics
written in Japanese and mainly for high school teachers.
After his work, in 2013,
Shigenori Tochiori~\cite{Toc} improved his idea
and Ichiro Sainose~\cite{Sai} applied their method to \( P(2) \).
In 2018, Naoya Suzuki~\cite{Suz} improved Sainose's proof of \( P(2) \).
Their proofs are entirely elementary.
Recently in 2023,
the third author showed \( P(5) \) in his master's thesis
according to this way with some new idea.
In this paper, we extend his result
to \( P(k) \) for \( k \leqq 15 \) by refining his idea.
Our proof is also entirely elementary.

\section{Preliminaries}

Throughout this paper, \(
\RR , \; \QQ , \; \ZZ , \; \NN \) and \( \PP \) denote
the sets of all real numbers, rational numbers, integers,
positive integers and primes, respectively.
The symbols \( x,y \) always should be real numbers.
The symbols \( m,n,s,t,u \) always should be a natural numbers.
The symbol \( p \) always should be a prime.
We denote by \( \lfloor x \rfloor \) and by \( \lceil x \rceil \)
the greatest integer less than or equal to \( x \) and
the least integer greater than or equal to \( x \), respectively.
In this section, we show some lemmas for later use.

\subsection{Our direction}

Let \( k \in \NN \).
The proposition we treat in this paper is:
\[
 P(k): \;
 \forall N \in \NN , \;
 \exists L_{k,N} \in \RR ; \; \forall x \geqq L_{k,N} ,
 \; \# ( \PP \cap [kx, (k+1)x] ) \geqq N.
\]
Here, let \( a \in \NN \) and
we consider one more proposition:
\[
 P^* (k,a): \;
 \forall N \in \NN , \;
 \exists L_{k,a,N}^* \in \NN ; \; \forall n \geqq L_{k,a,N}^* ,
 \; \# ( \PP \cap (kan, (k+1)an] ) \geqq N.
\]
\begin{lem}
Two propositions \( P^* (k,a) \) and \( P(k) \) are equivalent.
\end{lem}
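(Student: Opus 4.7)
The plan is to prove the two implications separately.

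For the direction $P(k) \Rightarrow P^*(k,a)$, which is the easy one, I would apply $P(k)$ with the strengthened parameter $N+1$ at the specific real values $x = an$. For $n \geq \lceil L_{k,N+1}/a \rceil$, the closed interval $[kan, (k+1)an]$ contains at least $N+1$ primes, and passing to the half-open interval $(kan, (k+1)an]$ discards at most the single endpoint $kan$, leaving at least $N$ primes. This yields $P^*(k,a)$ with $L_{k,a,N}^* := \lceil L_{k,N+1}/a \rceil$.

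For the reverse direction $P^*(k,a) \Rightarrow P(k)$, I would set $n = \lfloor x/a \rfloor + 1$ for each real $x$, so that $an \in (x, x+a]$. Then $kan \in (kx, kx + ka]$ lies inside $[kx,(k+1)x]$ once $x \geq ka$, and $(k+1)an \in ((k+1)x, (k+1)x + (k+1)a]$ overshoots the right endpoint $(k+1)x$ by at most $(k+1)a$. Every prime in the subinterval $(kan, (k+1)x]$ therefore belongs to $[kx, (k+1)x]$, and the only primes in $(kan, (k+1)an]$ that are lost are those in the overshoot region $((k+1)x, (k+1)an]$, which contains at most some constant $C = C(k,a)$ primes (a crude bound is the number of integers in an interval of length $\leq (k+1)a$). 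Applying $P^*(k,a)$ with the inflated parameter $N + C$ then yields at least $N + C$ primes in $(kan, (k+1)an]$, and subtracting the lost ones leaves at least $N$ primes in $[kx,(k+1)x]$. Choosing $L_{k,N} := \max(ka,\, a L_{k,a,N+C}^{*})$ ensures both the geometric condition $x \geq ka$ and $n \geq L_{k,a,N+C}^{*}$.

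The main obstacle is conceptual rather than computational: since the intervals $(kan, (k+1)an]$ and $[kx,(k+1)x]$ share the same endpoint ratio $(k+1)/k$, a direct inclusion of the former into the latter forces the equality $an = x$, which cannot hold for arbitrary real $x$. The resolution is to accept a bounded overshoot at the right end and absorb the possible loss of primes into a constant shift of $N$, which works precisely because the overshoot length is $O(a)$ independent of $x$, while both $N$ and $C$ are constants. The easy direction, by contrast, only requires the trivial half-open versus closed boundary correction.
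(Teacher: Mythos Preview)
Your proof is correct and follows essentially the same approach as the paper's: for the nontrivial direction you absorb a bounded endpoint mismatch into a constant shift of $N$. The only cosmetic difference is that the paper takes $n=\lfloor x/a\rfloor$ (undershooting on the left by at most $ka$) rather than your $n=\lfloor x/a\rfloor+1$ (overshooting on the right by at most $(k+1)a$), which leads to the slightly tidier constant $L_{k,N}=aL_{k,a,N+ka}^{*}$.
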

\begin{proof}
\( P(k) \Rightarrow P^* (k,a) \) : It is easy to see
that we can take \( L_{k,a,N}^* = \lceil \frac{L_{k,N+1}}{a} \rceil \).
\\
\( P^* (k,a) \Rightarrow P(k) \) : Since \( kx
-ka \lfloor \frac{x}{a} \rfloor  < ka \),
the number of primes in the interval \(
\left( ka \lfloor \frac{x}{a} \rfloor , kx \right) \) is
equal or less than \( ka \).
Hence we can take \( L_{k,N} =a L_{k,a,N+ka}^* \).
\end{proof}
In this paper, we will show \( P^* (k,a) \) instead of \( P(k) \) for
some pairs of \( k \) and \( a \).

\subsection{Stirling's formula}

We need to use the Stirling's formula, an approximation for factorials.
However, some well-known versions of Stirling's formula
seem not to be proven by elementary way,
although they are strong and useful.
Here, we show the following elementary lemma,
which is a very weak version
of the Stirling's formula.

\begin{lem}[Stirling's formula, cf. Sainose~\cite{Sai}]
\label{lemm:str}
For any \( n \in \NN \), the following inequality holds:
\[
 n^n e^{-n+1} \leqq n! \leqq n^{n+1} e^{-n+1}
 .
\]
It becomes an equality if and only if \( n=1 \).
\end{lem}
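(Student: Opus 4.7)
My plan is to take natural logarithms of the desired double inequality, which reduces the lemma to proving
\[
 n \ln n - n + 1 \;\leqq\; \ln (n!) \;\leqq\; (n+1) \ln n - n + 1
\]
for every \( n \in \NN \). Since \( \ln (n!) = \sum_{k=1}^n \ln k \) and, using the elementary antiderivative \( (x \ln x - x)' = \ln x \), one has \( \int_1^n \ln x \, dx = n \ln n - n + 1 \), both bounds will follow from comparing the discrete sum \( \sum \ln k \) with this integral, exploiting the monotonicity of \( \ln \).

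For the lower bound, I would observe that on each interval \( [k-1,k] \) with \( k \geqq 2 \), monotonicity gives \( \ln x \leqq \ln k \), hence \( \int_{k-1}^k \ln x \, dx \leqq \ln k \); summing from \( k = 2 \) to \( n \) and noting that the \( k = 1 \) term of \( \ln(n!) \) vanishes yields \( n \ln n - n + 1 \leqq \ln(n!) \). For the upper bound, the same monotonicity gives \( \int_{k-1}^k \ln x \, dx \geqq \ln(k-1) \), which I rearrange as \( \ln k \leqq \int_{k-1}^k \ln x \, dx + (\ln k - \ln(k-1)) \); summing from \( k = 2 \) to \( n \), the second summand telescopes to \( \ln n - \ln 1 = \ln n \), producing exactly \( (n+1)\ln n - n + 1 \) on the right.

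For the equality clause, the case \( n = 1 \) reduces to \( 1 \leqq 1 \leqq 1 \) by direct substitution. For \( n \geqq 2 \), the pointwise comparison \( \ln(k-1) < \ln x < \ln k \) on \( (k-1,k) \) is strict by continuity and strict monotonicity of \( \ln \), so each integral comparison above is strict, giving strict inequality in both bounds. I do not expect a serious obstacle here: the only points requiring attention are the telescoping on the upper bound and the correct treatment of the \( k = 1 \) summand, both of which are routine.
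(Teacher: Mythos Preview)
Your proof is correct and follows essentially the same approach as the paper: both take logarithms and sandwich the sum \(\sum_{k=2}^{n}\log k\) between \(\int_{1}^{n}\log x\,dx\) and \(\log n+\int_{1}^{n}\log x\,dx\) via the monotonicity of \(\log\). Your telescoping derivation of the upper bound and your explicit treatment of the equality case are more detailed than the paper's terse version, but the underlying argument is the same.
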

\begin{proof}
We may assume \( n \geqq 2 \).
Since \( \log (n!) = \sum_{x=2}^n \log x \), we have
\[
 \int_{1}^{n} \log x \, dx
 <
 \log (n!)
 < \log n + \int_{1}^{n} \log x \, dx
 .
\]
As \( \int_{1}^{n} \log x \, dx = n \log n -n+1 \), we have
\[
 n \log n -n+1 < \log (n!) < \log n + n \log n -n+1 
 .
\]
\end{proof}

\begin{rem}
The following inequality is a strong version of the Stirling's formula
given by Robbins~\cite{Rob}:
\[
 \sqrt{2 \pi } \, n^{n+ \frac{1}{2}} e^{-n+\frac{1}{12n+1}}
 < n! <
 \sqrt{2 \pi } \, n^{n+ \frac{1}{2}} e^{-n+\frac{1}{12n}}
 .
\]
Generally, his proof is not regarded as elementary.
Balliet~\cite{Bal} proved \( P(k) \) for \( 5 \le k \le 8 \) by
using this inequality.
\end{rem}

\subsection{Legendre's formula}

Another elementary tool we need in this paper is
the Legendre's formula, which gives
the prime factorization of factorials.
Let
\[
 n! = \prod_{p \in \PP} p^{\nu_p (n!)}
\]
be a prime factorization of \( n! \).
Namely, \( \nu_p (n!) \) denotes the exponent
of the largest power of \( p \) that divides \( n! \).

\begin{lem}[Legendre's formula]
\label{lemm:leg}
For any \( p \in \PP \) and \( n \in \NN \), we have
\[
 \nu_p (n!)=
 \sum_{j=1}^{\infty} \left\lfloor \frac{n}{p^j} \right\rfloor
 .
\]
\end{lem}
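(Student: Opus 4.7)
The plan is to prove Legendre's formula by a straightforward double-counting argument. First I would use the fact that the \( p \)-adic valuation is additive over products to write
\[
 \nu_p (n!) = \sum_{m=1}^n \nu_p (m) ,
\]
and then exploit the elementary identity that for each positive integer \( m \), the exponent \( \nu_p (m) \) equals the number of \( j \in \NN \) with \( p^j \mid m \). Indeed, if \( \nu_p (m) = e \), then \( p, p^2, \ldots, p^e \) all divide \( m \) while \( p^{e+1} \) does not, so exactly \( e \) values of \( j \) contribute.

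Substituting this into the previous sum and swapping the order of summation gives
\[
 \nu_p (n!) = \sum_{m=1}^n \# \{ j \in \NN : p^j \mid m \}
 = \sum_{j=1}^{\infty} \# \{ m \in \NN : 1 \leqq m \leqq n , \; p^j \mid m \} .
\]
For each fixed \( j \), the multiples of \( p^j \) in \( \{1, 2, \ldots , n\} \) are precisely \( p^j, 2p^j, \ldots , \lfloor n/p^j \rfloor \cdot p^j \), so the inner cardinality is \( \lfloor n/p^j \rfloor \), which yields the claimed formula.

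The sum over \( j \) is effectively finite, since \( \lfloor n/p^j \rfloor = 0 \) as soon as \( p^j > n \); in particular the interchange of summation is an entirely finite manipulation and needs no further justification. There is no real obstacle in this proof: the only conceptual step is the identity expressing \( \nu_p (m) \) as the number of prime powers \( p^j \) dividing \( m \), and the rest is bookkeeping by counting multiples of \( p^j \) in \( [1,n] \).
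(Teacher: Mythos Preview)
Your proof is correct and is essentially the same double-counting argument the paper gives, only written out in more detail: the paper compresses your steps into the single line
\(
 \nu_p (n!) = \sum_{j=1}^{\infty} \# \{ m \in \NN \mid m \leqq n,\; p^j \mid m \} = \sum_{j=1}^{\infty} \lfloor n/p^j \rfloor
\).
Your explicit intermediate step \( \nu_p (n!)= \sum_{m=1}^n \nu_p (m) \) and the swap of summation make the implicit reasoning behind that line visible, but the underlying idea is identical.
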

\begin{proof}
\[
 \nu_p (n!)
 =
 \sum_{j=1}^{\infty}
 \# \{ \; m \in \NN \; | \; m \leqq n , \; p^j | m \; \}
 =
 \sum_{j=1}^{\infty} \left\lfloor \frac{n}{p^j} \right\rfloor
\]
\end{proof}

\subsection{Prime counting function and primorial}

For any \( x \in \RR \),
we denote by \( \pi (x) \) and \( \Theta (x) \) the
prime counting function and the primorial of \( x \) by \( \Theta (x) \),
respectively.
Namely, \( \pi (x) \) is the number of primes
less than or equal to \( x \) and \( \Theta (x) \) is
the product
extending over all prime numbers \( p \) that are
less than or equal to \( x \):
\[
 \pi (x) := \# \{ \; p \in \PP \; | \; p \leqq x \; \} , \qquad
 \Theta (x) := \prod_{p \leqq x} p
 .
\]
For simplicity, we define \( \Theta (x)=1 \) for \( x<2 \).

\begin{lem}[cf. Hitotsumatsu~\cite{Hit}]
\label{lemm:pie}
For any \( x \geqq 36 \), the following inequality holds:
\[
 \pi (x) \leqq \frac{x}{3}
 .
\]
\end{lem}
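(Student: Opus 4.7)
The key observation is that every prime $p \geq 7$ is coprime to $30 = 2 \cdot 3 \cdot 5$, and there are exactly $8$ residue classes modulo $30$ coprime to $30$ (namely $1, 7, 11, 13, 17, 19, 23, 29$). Since $8 < 10 = 30/3$, on large scales the primes are genuinely sparser than the bound demands, so the whole task reduces to controlling the boundary.

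First I would reduce to the integer case: since $\pi(x) = \pi(\lfloor x \rfloor)$ and $\lfloor x \rfloor \leq x$, it suffices to prove $\pi(n) \leq n/3$ for integers $n \geq 36$. Write $n = 30m + r$ with $m \geq 1$ and $0 \leq r \leq 29$. Separating the three primes $2, 3, 5$ and bounding the remaining primes by the count of integers coprime to $30$ gives
\[
 \pi(n) \leq 3 + \bigl| \{ k : 7 \leq k \leq n, \ \gcd(k,30) = 1 \} \bigr|
 = 3 + (8m - 1) + f(r),
\]
where $f(r) := |\{ k \in [1,r] : \gcd(k,30) = 1 \}|$. The desired inequality $\pi(n) \leq n/3$ then rearranges cleanly to $f(r) - r/3 \leq 2m - 2$.

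For $m \geq 2$ the right-hand side is at least $2$; a direct inspection of the tabulated values of $f$ shows that $\max_{0 \leq r \leq 29}(f(r) - r/3) = 2/3$ (attained at $r = 1$), so the inequality is automatic. For $m = 1$ the hypothesis $n \geq 36$ forces $r \geq 6$, and on the restricted range $r \in \{6, 7, \ldots, 29\}$ a short case check gives $\max (f(r) - r/3) = -1/3 < 0 = 2m - 2$.

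The main subtlety is the tightness of the $m = 1$ case: the inequality $\pi(n) \leq n/3$ really fails for some $n < 36$, e.g.\ $\pi(31) = 11 > 31/3$, so the cutoff $x \geq 36$ is essentially optimal for this counting strategy, and the finite case check over the two dozen values $r \in \{6, \ldots, 29\}$ is unavoidable. Once that is dispatched, the $m \geq 2$ case goes through with considerable slack.
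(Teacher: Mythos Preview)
Your proof is correct, but it takes a somewhat longer route than the paper's. The paper works modulo $6$ rather than $30$: since every prime $p>3$ is congruent to $1$ or $5$ modulo $6$, each block $\{3n+1,3n+2,3n+3\}$ with $n\geqq 2$ contains at most one prime, so $\pi(3(n+1))-\pi(3n)\leqq 1$; combined with the single base value $\pi(36)=11=36/3-1$, this finishes the argument by a one-line induction on multiples of~$3$. Your modulus-$30$ count yields a strictly smaller asymptotic density ($8/30$ versus $2/6$) and hence more slack for large $m$, but at the cost of tabulating $f(r)-r/3$ over two dozen residues and splitting off the $m=1$ case. Since the target bound $x/3$ coincides exactly with the mod-$6$ density, that extra slack is never needed here, which is why the paper's argument is shorter; your approach would, however, become the natural one if the lemma were strengthened to something like $\pi(x)\leqq 4x/15$ for large~$x$.
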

\begin{proof}
Because every odd prime number
other than \( 3 \) are \( 1 \) or \( 5 \) modulo \( 6 \),
we have \( \pi (3(n+1))- \pi (3n) \leqq 1 \) for \( n \geqq 2 \).
Since \( \pi (36)= 11 = \frac{36}{3} -1 \), this lemma holds.
\end{proof}

\begin{lem}[cf. Erd\"{o}s~\cite{Erd}, Tochiori~\cite{Toc}]
\label{lemm:the}
For any \( x \geqq 2 \), the following inequality holds:
\[
 \Theta (x) \leqq \frac{1}{8} \cdot 4^x
 .
\]
\end{lem}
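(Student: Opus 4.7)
The plan is strong induction on the integer $n = \lfloor x \rfloor$. Because $\Theta$ is a step function that changes only at primes, we have $\Theta(x) = \Theta(\lfloor x \rfloor)$ while $4^x \geq 4^{\lfloor x \rfloor}$, so it suffices to prove $\Theta(n) \leq \frac{1}{8} \cdot 4^n$ for every integer $n \geq 2$. The base case $n = 2$ is the sharp equality $\Theta(2) = 2 = \frac{1}{8} \cdot 4^2$; in fact this is the only case of equality, which is exactly why the $\frac{1}{8}$ constant is natural.

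For the inductive step I would split on the parity of $n$. If $n$ is even and $n \geq 4$, then $n$ is not prime, so $\Theta(n) = \Theta(n-1) \leq \frac{1}{8} \cdot 4^{n-1} \leq \frac{1}{8} \cdot 4^n$ by the hypothesis. If $n$ is odd, write $n = 2m+1$ with $m \geq 1$. The engine is the classical Erd\"{o}s observation: every prime $p$ with $m+1 < p \leq 2m+1$ appears exactly once in the numerator of $\binom{2m+1}{m} = \frac{(2m+1)!}{m!(m+1)!}$ and not at all in the denominator, so $p \mid \binom{2m+1}{m}$. Since distinct primes are coprime, their product divides $\binom{2m+1}{m}$, yielding
\[
 \Theta(2m+1) \leq \Theta(m+1) \cdot \binom{2m+1}{m}.
\]

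To bound the binomial coefficient I would use that the two middle terms $\binom{2m+1}{m} = \binom{2m+1}{m+1}$ both appear in the expansion of $2^{2m+1} = (1+1)^{2m+1}$, so $2\binom{2m+1}{m} \leq 2^{2m+1}$, hence $\binom{2m+1}{m} \leq 4^m$. Applying the inductive hypothesis to $\Theta(m+1)$ (valid since $2 \leq m+1 < n$ when $m \geq 1$) then gives
\[
 \Theta(2m+1) \leq \tfrac{1}{8} \cdot 4^{m+1} \cdot 4^m = \tfrac{1}{8} \cdot 4^{2m+1},
\]
closing the induction. There is no deep obstacle; the only thing to watch is the arithmetic bookkeeping so that the constant $\frac{1}{8}$ is carried through without loss. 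This works precisely because $4^{m+1} \cdot 4^m = 4^{2m+1}$ recombines exactly, and the base case $\Theta(2) = 2$ matches $\frac{1}{8} \cdot 4^2 = 2$ on the nose, so the argument is tight at $n=2$ and loose for $n \geq 3$.
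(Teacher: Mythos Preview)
Your proof is correct and follows essentially the same Erd\H{o}s induction as the paper: reduce to integers, verify the base case $n=2$, and for the inductive step use that the primes in $(m+1,2m+1]$ divide the central binomial coefficient $\binom{2m+1}{m}\leqq 4^m$ (the paper writes this as $\Theta(2n-1)/\Theta(n)\leqq\binom{2n-1}{n-1}\leqq 4^{n-1}$, which is the same inequality reindexed). The only cosmetic difference is that the paper handles both parities at once via $\Theta(2n)=\Theta(2n-1)$ rather than splitting explicitly into even and odd cases.
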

\begin{proof}
It is enough to show the case \( x=n \in \NN \).
We can easily see that the case \( n=2 \) is true.
Because \( \frac{\Theta (2n-1)}{\Theta (n)} \leqq \binom{2n-1}{n-1} \leqq
\frac{1}{2} (1+1)^{2n-1} = 4^{n-1} \),
we have \( \Theta (2n)= \Theta (2n-1) \leqq 4^{n-1} \Theta (n) \).
Hence, by induction, it follows that this lemma holds.
\end{proof}

\begin{rem}
This lemma is a very rough estimation of the primorial.
For example, if we start from \( \frac{(6n)!}{(3n)!(2n)!n!} \),
we have \( \Theta (x)< 3.4^x \) by similar way.
Hanson~\cite{Han} showed \( \Theta (x)< 3^x \) by improving this idea.
But Balliet~\cite{Bal} proved \( P(k) \) for \( 5 \le k \le 8 \) by
using much sharper estimation, which was proved by modern deep mathematics.
Actually, we know
that \( \lim_{x \to \infty} (\Theta (x))^{\frac{1}{x}} =e \).
As for the discussion of this paper,
the sharpness of the estimation of primorial
is closely related to the difficulty of showing \( P(k) \).
Basically, we use {\bf{Lemma \ref{lemm:the}}} as the estimation
of the primorial in this paper.
\end{rem}

\section{Main Theory}

\subsection{General consideration}

\label{sec:genc}

In this subsection we consider generalization of previous researches
(c.f.~\cite{Bac,Bal,Erd,Hit,Sai,Toc,Suz}).
To prove \( P^* (k,a) \),
let \( a_1 := (k+1)a , \; b_1 := ka \) and
take \( a_2 , \dots , a_s \in \NN \) and \(
b_2 , \dots , d_t \in \NN \) such that
\[
 b_1 \geqq a_2 \geqq \cdots \geqq a_s
 , \quad
 b_1 \geqq b_2 \geqq \cdots \geqq b_t
\]
and
\begin{equation}
\label{eq:sscd}
 a_1 + a_2 + \cdots + a_s
 =
 b_1 + b_2 + \cdots + b_t
\end{equation}
hold.
We put
\[
 F_k (n):= \frac{
 ( a_1 n)!( a_2 n)! \cdots ( a_s n)!
 }{
 ( b_1 n)!( b_2 n)! \cdots ( b_t n)!
 }
 \in \QQ
 .
\]
Let
\[
 F_k (n) = \prod_{p \in \PP} p^{\nu_{k,p} (n)}
\]
be a prime factorization of \( F_k (n) \).
By {\bf{Lemma \ref{lemm:leg}}}, we have
\[
 \nu_{k,p} (n) = \sum_{j=1}^{\infty} G_k \left( \frac{n}{p^j} \right)
 \in \ZZ
 ,
\]
where
\[
 G_k \left( x \right) :=
 \lfloor a_1 x \rfloor
 + \lfloor a_2 x \rfloor
 + \cdots
 + \lfloor a_s x \rfloor
 - \lfloor b_1 x \rfloor
 - \lfloor b_2 x \rfloor
 - \cdots
 - \lfloor b_t x \rfloor
 .
\]
By the condition (\ref{eq:sscd}), \( G_k (x) \) depends
only on the fractional part of \( x \).
Let
\[
 L:= {\mathrm{LCM}} ( a_1 , a_2 , \dots , a_s , b_1 , b_2 , \dots , b_t )
 .
\]
Then the value of \( G_k (x) \) is determined
from the integral part of \( Lx \).
Therefore we can define the maximum of \( G_k (x) \), namely
\[
 M:= \max \left\{ \; G_k \left( \frac{j}{L} \right) \; \middle| \;
 j \in \{ 0,1, \dots ,L-1 \} \; \right\}
 .
\]
We can show the following properties easily.
\begin{itemize}
\item
\( G_k (x)=0 \) if \( 0 \leqq x< \frac{1}{a_1} \).
\item
\( G_k (x)=1 \) if \( \frac{1}{a_1} \leqq x< \frac{1}{b_1} \).
\item
\( G_k (x) \leqq 0 \) if \( \frac{1}{b_1} \leqq x< \min \left\{
\frac{2}{a_1} , \frac{1}{a_2} \right\} \).
\end{itemize}
Let
\[
 c_i := \min \left\{ x \geqq \frac{1}{b_1} \; \middle| \;
 G_k (x) \geqq i \right\}
 \quad
 (i \in \NN , \; i \leqq M)
 .
\]
This means that \( G_k (x)<i \) for any \( \frac{1}{b_1} \leqq x< c_i \).
Hence \( G_k \left( \frac{n}{p} \right) <i \) when \(
\frac{n}{ c_i } <p \leqq b_1 n \).

\bigskip

Now we assume that \( n \) is sufficiently large and
decompose \( F_k (n) \) as follows:
\[
 F_k (n)
 =
 \underbrace{ \left(
 \prod_{b_1 n<p \leqq a_1 n} p
 \right) }_{F_{k,{\textrm{A}}} (n)}
 \underbrace{ \left(
 \prod_{ \sqrt{a_1 n} <p \leqq b_1 n } p^{G_k \left( \frac{n}{p} \right)}
 \right) }_{F_{k,{\textrm{B}}} (n)}
 \underbrace{ \left(
 \prod_{ p< \sqrt{a_1 n} } p^{\nu_{k,p} (n)}
 \right) }_{F_{k,{\textrm{C}}} (n)}
 .
\]

\medskip

\noindent
{\bf{(A)}}
As for \( F_{k,{\textrm{A}}} (n) \), we have
\[
 F_{k,{\textrm{A}}} (n) \leqq ( a_1 n)^{N_k (n)}
 ,
\]
where
\[
 N_k (n) := \pi ( a_1 n) - \pi ( b_1 n)
 ,
\]
which is the number of primes
in the interval \( (b_1 n, a_1 n] \).

\medskip

\noindent
{\bf{(B)}}
As for \( F_{k,{\textrm{B}}} (n) \), we have
\[
 F_{k,{\textrm{B}}} (n)
 \leqq
 \prod_{p \leqq b_1 n } p^{G_k \left( \frac{n}{p} \right)}
 \leqq
 \prod_{i=1}^M \left( \prod_{p \leqq \frac{n}{c_i}} p \right)
 =
 \prod_{i=1}^M \Theta \left( \frac{n}{c_i} \right)
 .
\]
Hence, by {\bf{Lemma \ref{lemm:the}}}, we have
\[
 F_{k,{\textrm{B}}} (n)
 \leqq
 \prod_{i=1}^M \Theta \left( \frac{n}{c_i} \right)
 \leqq
 \frac{4^{cn}}{8^M} \qquad
 \left( \; c:= \sum_{i=1}^{M} \frac{1}{c_i} \; \right)
 .
\]

\medskip

\noindent
{\bf{(C)}}
As for \( F_{k,{\textrm{C}}} (n) \),
by {\bf{Lemma \ref{lemm:pie}}}, we have
\[
 F_{k,{\textrm{C}}} (n) \leqq
 \prod_{ p< \sqrt{a_1 n} } p^{M \log_p ( a_1 n)}
 =( a_1 n)^{M \pi ( \sqrt{a_1 n} )}
 \leqq ( a_1 n)^{\frac{M \sqrt{ a_1 n}}{3}}
 .
\]

\medskip

Summarizing the above,
we have an upper bound of \( F_k (n) \):
\[
 F_k (n) \leqq
 ( a_1 n)^{N_k (n)}
 \left( \frac{4^{cn}}{8^M} \right)
 ( a_1 n)^{\frac{M \sqrt{ a_1 n}}{3}}
 .
\]
On the other hand,
by {\bf{Lemma \ref{lemm:str}}},
we have a lower bound of \( F_k (n) \):
\[
 F_k (n)
 \geqq
 \frac{\bigl( a_1^{a_1} a_2^{a_2} \cdots a_s^{a_s} \bigr)^n
 e^{s-t}}{
 \bigl( b_1^{b_1} b_2^{b_2} \cdots b_t^{b_t} \bigr)^n
 \left( b_1 b_2 \cdots b_t \right) n^t }
 .
\]
Hence
\[
 \frac{\bigl( a_1^{a_1} a_2^{a_2} \cdots a_s^{a_s} \bigr)^n
 e^{s-t}}{
 \bigl( b_1^{b_1} b_2^{b_2} \cdots b_t^{b_t} \bigr)^n
 \left( b_1 b_2 \cdots b_t \right) n^t }
 \leqq
 ( a_1 n)^{N_k (n)}
 \left( \frac{4^{cn}}{8^M} \right)
 ( a_1 n)^{\frac{M \sqrt{ a_1 n}}{3}}
\]
holds and therefore we have
\[
 N_k (n)
 \geqq
 \frac{
 n \left( \log \left( \frac{ a_1^{a_1} a_2^{a_2} \cdots a_s^{a_s}}{
 b_1^{b_1} b_2^{b_2} \cdots b_t^{b_t}} \right)
 - c \log 4 \right)
 + o(n)
 }{
 \log a_1 + \log n
 }
 .
\]
Consequently, we have the following theorem.

\begin{thm}
\label{thm:main}
If the condition
\begin{equation}
\label{eq:main}
 \left( \sum_{i=1}^s a_i \log a_i \right) -
 \left( \sum_{i=1}^t b_i \log b_i \right) >
 c \log 4
 ,
\end{equation}
holds, then
we have \( \displaystyle \lim_{n \to \infty} N_k (n) =+ \infty \) and
therefore \( P^* (k,a) \) is true.
\end{thm}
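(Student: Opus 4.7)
The plan is to extract the conclusion directly from the chain of inequalities already assembled above. From the pair of bounds on $F_k(n)$ --- the lower bound produced by the Stirling-type estimate (\textbf{Lemma \ref{lemm:str}}) and the upper bound $(a_1 n)^{N_k(n)} \cdot 4^{cn} 8^{-M} \cdot (a_1 n)^{M\sqrt{a_1 n}/3}$ produced by the A/B/C decomposition --- I would take logarithms on both sides and solve the resulting linear inequality for $N_k(n)$. After dividing through by $\log(a_1 n)=\log a_1+\log n$, this reproduces the asymptotic bound already displayed just before the theorem statement, namely
\[
 N_k(n) \;\geq\; \frac{ n\Bigl(\bigl(\textstyle\sum_{i=1}^s a_i \log a_i\bigr) - \bigl(\textstyle\sum_{i=1}^t b_i \log b_i\bigr) - c\log 4\Bigr) + o(n) }{ \log a_1 + \log n }.
\]

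Under the hypothesis \eqref{eq:main}, the coefficient of $n$ in the numerator is a strictly positive constant, call it $\delta$. Every remaining contribution is of strictly smaller order: the factors $e^{s-t}$, $b_1 b_2 \cdots b_t$, and $8^{-M}$ produce only constants after taking logs; the $n^t$ factor produces a $-t\log n$ term; and --- the only piece that deserves care --- the factor $(a_1 n)^{M\sqrt{a_1 n}/3}$ contributes $\tfrac{M}{3}\sqrt{a_1 n}\,(\log a_1+\log n)$, which is still $o(n)$. Consequently the right-hand side behaves like $\delta n/\log n$ and tends to $+\infty$, giving $\lim_{n\to\infty} N_k(n) = +\infty$. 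Recalling $N_k(n) = \pi(a_1 n) - \pi(b_1 n) = \#\bigl(\PP \cap (kan,(k+1)an]\bigr)$, this means that for every prescribed $N\in\NN$ we can choose $L^{*}_{k,a,N}$ so that $N_k(n)\geq N$ for all $n\geq L^{*}_{k,a,N}$, which is exactly $P^{*}(k,a)$. Invoking the equivalence proved in the first lemma of Section 2 then yields $P(k)$.

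The main obstacle is not conceptual but notational: one must verify carefully that every leftover quantity --- especially the $F_{k,\textrm{C}}(n)$ factor $(a_1 n)^{M\sqrt{a_1 n}/3}$, whose exponent genuinely depends on $n$ --- contributes only $o(n)$ after taking logs, so that the sign of $\delta$ alone controls the limit. Once this bookkeeping is done, no new idea beyond the bounds (A), (B), (C) already established is needed; the theorem is essentially a clean packaging of what has already been proved in Section 3.1.
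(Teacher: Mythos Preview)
Your proposal is correct and is precisely the argument the paper gives: the theorem is stated immediately after the displayed lower bound for $N_k(n)$ as a direct consequence of the A/B/C decomposition combined with the Stirling-type lower bound, with no further ingredients. Your identification of the $\tfrac{M}{3}\sqrt{a_1 n}\,\log(a_1 n)$ term as the dominant $o(n)$ contribution is exactly the point that needs checking, and you handle it just as the paper (implicitly) does.
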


\subsection{Previous researches}

In this subsection we summarize previous researches
from the view point of the previous subsection.

\begin{itemize}
\item
{\bf{In the case of \( k=1 \):}}\\
Essentially, Erd\"{o}s~\cite{Erd} proved \( P^* (1,1) \) by using
\[
 F_1^* (n) := \frac{(2n)!}{n!n!}
 .
\]
This is the case \( s=1, \; a_1 =2, \; t=2, \; b_1 = b_2 =1 \).
By easy calculation,
we have \( M=1, \; c_1 = \frac{3}{2} , \; c= \frac{2}{3} \) and
therefore the condition (\ref{eq:main}) holds.
Also,
Hitotsumatsu~\cite{Hit} and Tochiori~\cite{Toc} used the same function.
\item
{\bf{In the case of \( k=2 \) (a):}}\\
Essentially, Bachraoui~\cite{Bac} proved \( P^* (2,2) \) by using
\[
 F_2^* (n) := \frac{(6n)!n!}{(4n)!(3n)!}
 .
\]
In this case we have \( M=1, \; c_1 = \frac{7}{6} , \; c= \frac{6}{7} \) and
therefore the condition (\ref{eq:main}) holds.
Also, Suzuki~\cite{Suz} used the same function.
\item
{\bf{In the case of \( k=2 \) (b):}}\\
Sainose~\cite{Sai} proved \( P^* (2,2) \) by using
\[
 F_2 (n) := \frac{(6n)!(2n)!}{(4n)!(3n)!n!}
 .
\]
In this case we have \( M=1, \; c_1 = \frac{1}{2} , \; c=2 \) and
therefore the condition (\ref{eq:main}) holds.
\item
{\bf{In the case of \( k=4 \):}}\\
Essentially, Balliet~\cite{Bal} proved \( P^* (4,6) \) by using
\[
 F_4 (n) :=
 \frac{(30n)!(12n)!(8n)!(3n)!(2n)!}{(24n)!(15n)!(10n)!(6n)!}
 .
\]
In this case
we have \( M=1, \; c_1 = \frac{23}{30} , \; c= \frac{30}{23} \) and
therefore the condition (\ref{eq:main}) holds.
\item
{\bf{In the case of \( k \geqq 5 \):}}\\
In the case of \( k \geqq 5 \),
Balliet~\cite[p.17]{Bal} insisted that his method is inconclusive.
Since
\[
  F_1^* (n) = \binom{2n}{n}
  , \quad
  F_2^* (n/2) = \frac{\binom{3n}{2n}}{\binom{3n/2}{2n/2}}
  \quad \text{and} \quad
  F_4 (n/6) = \frac{\binom{5n}{4n}}{\binom{5n/2}{4n/2}\binom{5n/3}{4n/3}}
 ,
\]
he thought the next one will be
\[
 F_5 (n/30) = \frac{\binom{6n}{5n}}{
 \binom{6n/2}{5n/2}\binom{6n/3}{5n/3}\binom{6n/5}{5n/5}}
 .
\]
However, \( F_5 (n/30)<1 \) for large \( n \).
This means that the condition (\ref{eq:main}) does not hold
without any calculation of \( c \).
\end{itemize}

Here we remark that all the above success cases are for \( M=1 \).
Since {\bf{Theorem \ref{thm:main}}} can be
applied for any \( M \), our argument is already new.
Intuitively, because
the right hand side \( (\text{RHS}) \) of
the condition (\ref{eq:main}) may become larger when \( M>1 \),
there is considerable resistance to setting \( M>1 \).
However, in practice, as we will see below,
this theorem is quite useful even when \( M \) is large.

\subsection{Our new idea}

Let \( P \subset \PP \) be a finite set and \( Q \) be
the product of all primes in \( P \).
Namely,
\[
 Q:= \prod_{p \in P} p
 .
\]
In this subsection we put
\[
 F_k (n) =
 F_k^* (P;n) :=
 \prod_{m|Q}
 \left( \frac{
 \left( \frac{(k+1)nQ}{m} \right) !
 }{
 \left( \frac{knQ}{m} \right) ! \left( \frac{nQ}{m} \right) !
 } \right)^{\mu (m)}
 =
 \prod_{p \in \PP} p^{\nu_{k,p} (P;n)}
 ,
\]
where \( \mu \) is the M{\"{o}}bius function,
and we apply the argument in {\bf{\S\ref{sec:genc}}}.
Then we have
\[
 \nu_{k,p} (P;n) = \sum_{j=1}^{\infty} G_k \left( P; \frac{n}{p^j} \right)
 ,
\]
where
\[
 G_k \left( P;x \right) :=
 \sum_{m|Q} \mu (m) \left(
 \left\lfloor \frac{(k+1)Q}{m}x \right\rfloor
 -
 \left\lfloor \frac{kQ}{m}x \right\rfloor
 -
 \left\lfloor \frac{Q}{m}x \right\rfloor
 \right)
 .
\]

\begin{lem}
As for the above \( F_k^* (P;n) \),
the left hand side \( (\text{LHS}) \) of
the condition (\ref{eq:main}) is
\[
 \varphi (Q) \log \left( \frac{(k+1)^{k+1}}{k^k} \right)
 ,
\]
where \( \varphi \) is the Euler's totient function.
\end{lem}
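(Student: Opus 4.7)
The plan is to compute the LHS of (\ref{eq:main}) directly by unwinding the definition of $F_k^*(P;n)$. Since $Q$ is squarefree, every divisor $m$ of $Q$ is squarefree and $\mu(m)\in\{+1,-1\}$. When I rewrite $F_k^*(P;n)$ in the standard form $\frac{(a_1 n)!\cdots(a_s n)!}{(b_1 n)!\cdots(b_t n)!}$, the factor $((k+1)nQ/m)!$ belongs to the numerator exactly when $\mu(m)=+1$ and to the denominator when $\mu(m)=-1$, while the factors $(knQ/m)!$ and $(nQ/m)!$ switch sides in the opposite way. Hence, collecting signs, the LHS of (\ref{eq:main}) becomes
\[
 \sum_{m\mid Q}\mu(m)\left[\frac{(k+1)Q}{m}\log\frac{(k+1)Q}{m}
 - \frac{kQ}{m}\log\frac{kQ}{m}
 - \frac{Q}{m}\log\frac{Q}{m}\right].
\]

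Next I would expand each logarithm via $\log\frac{aQ}{m}=\log a+\log\frac{Q}{m}$. Inside the bracket, the $\log\frac{Q}{m}$ pieces come with net coefficient $\frac{Q}{m}\bigl[(k+1)-k-1\bigr]=0$, so they cancel for every $m$. What remains is
\[
 \sum_{m\mid Q}\mu(m)\cdot\frac{Q}{m}\bigl[(k+1)\log(k+1)-k\log k\bigr]
 =
 \log\frac{(k+1)^{k+1}}{k^k}\sum_{m\mid Q}\mu(m)\cdot\frac{Q}{m}.
\]
The remaining arithmetic sum is exactly the M\"obius-inverted form of the classical identity $Q=\sum_{d\mid Q}\varphi(d)$, so it equals $\varphi(Q)$, and the claim follows.

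The only step that requires genuine care is the first: tracking correctly, as a function of the sign of $\mu(m)$, which factorial lands in the numerator and which in the denominator when $F_k^*(P;n)$ is put into the standard $(a_i,b_i)$-form. Once the signs are pinned down, the cancellation of the $\log(Q/m)$ cross-terms and the appeal to M\"obius inversion are routine, and the identity is purely algebraic, with no analytic content.
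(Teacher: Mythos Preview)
Your proof is correct and follows essentially the same approach as the paper. Both arguments first rewrite the LHS as the same M\"obius-weighted sum over $m\mid Q$, then cancel the cross-terms (you via the identity $(k+1)-k-1=0$ inside each bracket after splitting $\log\frac{aQ}{m}=\log a+\log\frac{Q}{m}$; the paper via an auxiliary function $Z(x)=\sum_{m\mid Q}\mu(m)\frac{x}{m}\log\frac{x}{m}$ whose linear-in-$x$ part vanishes in $Z((k+1)Q)-Z(kQ)-Z(Q)$), and both finish with the same identity $\sum_{m\mid Q}\mu(m)\frac{Q}{m}=\varphi(Q)$.
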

\begin{proof}
First we put
\[
 Z(x) := \sum_{m|Q} \mu (m) \frac{x}{m} \log \frac{x}{m}
 .
\]
Then we have
\begin{align*}
 Z(x)
 & =
\sum_{m|Q} \mu (m) \frac{x}{m} ( \log x - \log m)
 \\
 & =
 \sum_{m|Q} \mu (m) \frac{x}{m} \log x
 - \sum_{m|Q} \mu (m) \frac{x}{m} \log m
 \\
 & =
 \left( \sum_{m|Q} \frac{\mu (m)}{m} \right) x \log x
 - \left( \sum_{m|Q} \frac{\mu (m) \log m}{m} \right) x
 \\
 & =
 \frac{\varphi(Q)}{Q} x \log x
 - \left( \sum_{m|Q} \frac{\mu (m) \log m}{m} \right) x
 .
\end{align*}
Therefore, we have
\begin{align*}
 (\text{LHS})
 & =
 \sum_{m|Q} \mu (m) \left(
 \frac{(k+1)Q}{m} \log \frac{(k+1)Q}{m}
 -
 \frac{kQ}{m} \log \frac{kQ}{m}
 -
 \frac{Q}{m} \log \frac{Q}{m}
 \right)
 \\
 & =
 \sum_{m|Q} \left( Z((k+1)Q)-Z(kQ)-Z(Q) \right)
 \\
 & =
 \frac{\varphi(Q)}{Q}
 \left( (k+1)Q \log ((k+1)Q) -kQ \log (kQ) - Q \log Q \right)
 \\
 & =
 \varphi (Q) \log \left( \frac{(k+1)^{k+1}}{k^k} \right)
 .
\end{align*}
\end{proof}

Since \( \varphi (Q) = \prod_{p \in P} (p-1) \),
we can compute the value of (LHS) immediately.
To compute the value of (RHS), the following lemma and corollary are
useful to find \( c_i \)'s.

\begin{lem}
\label{lemm:fdci}
We have
\[
 G_k \left( P;x \right) = \sum_{(u,Q)=1}
 \left( \chi_{u} ((k+1)Qx) - \chi_{u} (kQx) - \chi_{u} (Qx) \right)
 ,
\]
where we put
\[
 \chi_y (x) := \left\{
 \begin{array}{ll}
 1 & (x \geqq y)\\
 0 & (x<y)\\
 \end{array}
 \right.
 .
\]
\end{lem}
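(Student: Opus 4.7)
The plan is to expand each floor function in the definition of $G_k(P;x)$ as a sum of indicator functions, swap the order of summation, and then collapse the inner sum using the standard Möbius identity $\sum_{m \mid n} \mu(m) = [n=1]$.

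First I would record the elementary identity $\lfloor y \rfloor = \sum_{u=1}^{\infty} \chi_u(y)$ valid for $y \geq 0$, which just counts the positive integers not exceeding $y$. Applied to $y = (k+1)Qx/m$, $y = kQx/m$, and $y = Qx/m$ (all nonnegative in the situations of interest), this rewrites each floor as an infinite sum with only finitely many nonzero terms. The second observation I need is the trivial scaling $\chi_u(y/m) = \chi_{um}(y)$, since both are the indicator of the condition $y \geq um$. Combining these, each of the three floor terms in the definition of $G_k(P;x)$ becomes $\sum_{u=1}^{\infty} \chi_{um}(\cdot)$ with the appropriate argument.

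Plugging this in,
\[
 G_k(P;x) = \sum_{m \mid Q} \mu(m) \sum_{u=1}^{\infty}
 \bigl( \chi_{um}((k+1)Qx) - \chi_{um}(kQx) - \chi_{um}(Qx) \bigr).
\]
Now I swap the two summations and reindex by $v := um$. For each fixed $v \in \NN$, the divisors $m$ of $Q$ with $u = v/m$ a positive integer are exactly the divisors of $\gcd(v,Q)$, so the coefficient of $\chi_v(\cdot)$ becomes $\sum_{m \mid \gcd(v,Q)} \mu(m)$. By the standard Möbius identity this equals $1$ when $\gcd(v,Q)=1$ and $0$ otherwise, which leaves exactly the asserted formula
\[
 G_k(P;x) = \sum_{(v,Q)=1}
 \bigl( \chi_v((k+1)Qx) - \chi_v(kQx) - \chi_v(Qx) \bigr).
\]

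There is essentially no obstacle here: the only thing to be careful about is the interchange of summations, which is harmless since for each fixed $x$ only finitely many $\chi_{um}(\cdot)$ are nonzero, so everything is a finite rearrangement. The whole proof reduces to two one-line identities (floor-as-counting and Möbius inversion), once one notices the clean reindexing $v=um$.
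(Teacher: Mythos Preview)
Your proof is correct and follows essentially the same route as the paper: expand each floor as $\sum_u \chi_u$, use the scaling $\chi_u(y/m)=\chi_{um}(y)$, reindex by the product $um$, and collapse via $\sum_{m\mid (v,Q)}\mu(m)=[\,(v,Q)=1\,]$. The only cosmetic difference is that the paper first establishes the single identity $\sum_{m\mid Q}\mu(m)\lfloor x/m\rfloor=\sum_{(u,Q)=1}\chi_u(x)$ and then applies it to the three arguments $(k+1)Qx$, $kQx$, $Qx$, whereas you carry all three terms through together.
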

\begin{proof}
Since
\begin{align*}
 \sum_{m|Q} \mu (m) \left\lfloor \frac{x}{m} \right\rfloor
 & =
 \sum_{m|Q} \mu (m) \left( \sum_{j=1}^{\infty} \chi_{jm} (x) \right)
 \\
 & =
 \sum_{m|Q} \sum_{j=1}^{\infty} \mu (m) \chi_{jm} (x)
 \\
 & =
 \sum_{u=1}^{\infty} \sum_{m|(u,Q)} \mu (m) \chi_{u} (x)
 \\
 & =
 \sum_{(u,Q)=1} \chi_{u} (x)
 ,
\end{align*}
we have the assertion of the lemma.
\end{proof}

\begin{cor}
\( d_i := (k+1)Q c_i \in \NN \).
\end{cor}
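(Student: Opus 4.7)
The plan is to analyze the jump structure of the step function $G_k(P;\,\cdot\,)$ given by Lemma~\ref{lemm:fdci}. Each summand $\chi_u((k+1)Qx)$, $\chi_u(kQx)$, $\chi_u(Qx)$ is right-continuous and jumps by $+1$ precisely at $x = u/((k+1)Q)$, $u/(kQ)$, $u/Q$ respectively. Collecting these contributions with the signs from the lemma, the jump of $G_k(P;\,\cdot\,)$ at any point $x_0$ is a signed sum of at most three terms: a $+1$ iff $(k+1)Qx_0$ is a positive integer coprime to $Q$, and a $-1$ iff $kQx_0$ (respectively $Qx_0$) is a positive integer coprime to $Q$. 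Since within each of the three categories the matching $u$ is uniquely determined by $x_0$, the jump at $x_0$ lies in $\{-2,-1,0,+1\}$, and in particular can never exceed $+1$.

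Next I would show that the jump of $G_k(P;\,\cdot\,)$ at $c_i$ is exactly $+1$. The general discussion in \S\ref{sec:genc} gives $G_k(P;\,1/b_1) \leqq 0 < i$, so $c_i > 1/b_1$. For $x \in [1/b_1,\,c_i)$ the integer-valued $G_k(P;x)$ is at most $i-1$, while $G_k(P;c_i) \geqq i$ by the definition of $c_i$; hence the left-limit of $G_k(P;\,\cdot\,)$ at $c_i$ is at most $i-1$ and the jump there is at least $+1$. Combined with the universal upper bound from the previous paragraph, the jump equals $+1$ exactly.

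A jump of $+1$ forces the positive contribution to be the one that is present, meaning $(k+1)Qc_i$ is a positive integer (in fact coprime to $Q$), which is precisely the assertion $d_i \in \NN$. The main conceptual obstacle is the first paragraph: despite the infinite sum over $u$ in Lemma~\ref{lemm:fdci}, one must pin down the pointwise jump tightly enough to conclude that it is bounded above by $+1$; once that is in hand, the rest is a routine first-passage argument for the step function $G_k(P;\,\cdot\,)$.
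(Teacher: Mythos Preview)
Your proof is correct and rests on the same idea as the paper's: by Lemma~\ref{lemm:fdci}, any upward jump of $G_k(P;\,\cdot\,)$ can occur only at a point where some $\chi_u((k+1)Qx)$ jumps, so $(k+1)Qc_i$ must be a positive integer. The paper's proof is literally this one observation; your sharper analysis that the jump at $c_i$ equals exactly $+1$ is valid but more than is required, since any strictly positive jump already forces the $\chi_u((k+1)Qx)$ contribution to be present.
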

\begin{proof}
By {\bf{Lemma \ref{lemm:fdci}}}, the value of \( G_k (P;x) \) may
increase only when\\
\( \chi_{u} ((k+1)Qx) \) increases.
\end{proof}

Here we see some explicit examples.
We take \( P= \{ \; p \in \PP \; | \; p \leqq k \; \} \) in
all of the following examples.

\begin{itemize}
\item
{\bf{In the case of \( k=2 \):}}\\
\( F_2^* ( \{ 2 \} ;n) \) is certainly \( F_2^* (n) \) appeared before.
\item
{\bf{In the case of \( k=3 \):}}\\
As for
\[
 F_3^* ( \{ 2,3 \} ;n)= \frac{(24n)!(9n)!(4n)!(2n)!}{(18n)!(12n)!(8n)!n!}
 ,
\]
we have \( M=1, \; d_1 =13, \; c= \frac{24}{13} \) and
therefore the condition (\ref{eq:main}) holds,
because \( (\text{LHS})=16 \log 2 - 6 \log 3 >
(\text{RHS})= \frac{48}{13} \log 2 \).
\item
{\bf{In the case of \( k=4 \):}}\\
As for
\[
 F_4^* ( \{ 2,3 \} ;n) := \frac{
 (30n)!(12n)!(8n)!(5n)!(3n)!(2n)!
 }{
 (24n)!(15n)!(10n)!(6n)!(4n)!n!
 }
 ,
\]
we have \( M=1, \; d_1 = 13, \; c= \frac{30}{13} \) and
therefore the condition (\ref{eq:main}) holds,
because \( (\text{LHS})=10 \log 5 - 16 \log 2 >
(\text{RHS})= \frac{60}{13} \log 2 \).
\item
{\bf{In the case of \( k=5 \):}}\\
As for
\[
 F_5^* ( \{ 2,3,5 \} ;n) := \frac{
 (180n)!(75n)!(50n)!(30n)!(18n)!(12n)!n!
 }{
 (150n)!(90n)!(60n)!(36n)!(25n)!(3n)!(2n)!
 }
 ,
\]
we have \( M=2, \; d_1 =13, \; d_2 =49, \;
c= \frac{11160}{637} \).
Unfortunately, the condition (\ref{eq:main}) does not hold,
because \( (\text{LHS})=21.627 \cdots <
(\text{RHS})= \frac{11160}{637} \log 4 =24.287 \cdots \).
However, if we admit a bit sharper estimation of
the primorial \( \Theta (x) < 3.4^x \) instead
of {\bf{Lemma \ref{lemm:the}}},
this \( F_5^* \) proves \( P^* (5,30) \),
because \( \frac{11160}{637} \log 3.4 =21.440 \cdots \).
The same situation occurs in the case \( k=7,9,10 \).
\item
{\bf{In the case of \( k=6 \):}}\\
As for \( F_6^* ( \{ 2,3,5 \} ;n) \), we have \( M=2, \;
d_1 =13, \; d_2 =43 \) and
therefore the condition (\ref{eq:main}) holds,
because \( (\text{LHS})= 22.967 \cdots >
(\text{RHS})= 22.092 \cdots \).
\end{itemize}

The case \( k=5 \) imply that
we need another ingenuity for larger \( k \).

\subsection{Our ingenuity}

First we consider the case \( k=5 \).
In the last subsection,
we have \( d_1 =13 \) for \( F_5^* ( \{ 2,3,5 \} ;n) \).
But this \( d_1 \) is too small to satisfy the condition (\ref{eq:main}).
With reference to this fact,
we try the case \( F_5^* ( \{ 2,3,5,13 \} ;n) \) and then
we have the following explicit result.

\begin{itemize}
\item
{\bf{In the case of \( k=5 \):}}\\
As for \( F_5^* ( \{ 2,3,5,13 \} ;n) \),
we have
\( M=3, \; d_1 =19, \; d_2 =49, \;
d_3 =1309 \) and
therefore the condition (\ref{eq:main}) holds,
because \( (\text{LHS})= 259.523 \cdots >
(\text{RHS})= 239.414 \cdots \).
This method also work for \( k=7,9,10 \).
\end{itemize}

This method works well for some \( k \).

\begin{itemize}
\item
{\bf{In the case of \( k=8 \):}}\\
In this case we need to apply the above method twice.
Then, as for \( F_8^* ( \{ 2,3,5,7,19,31 \} ;n) \),
we have \( M=9, \; d_1 = 41, \; \dots \; , d_9 =785179 \) and
therefore the condition (\ref{eq:main}) holds,
because \( (\text{LHS})= 81375.551 \cdots >
(\text{RHS})= 73787.953 \cdots \).
\item
{\bf{In the case of \( k=11 \):}}\\
As for \( F_{11}^* ( \{ 2,3,5,7,11,31,43 \} ;n) \),
we have\\
\( M=11, \; d_1 =61, \; \dots \; , d_{11} =7544113 \) and
therefore the condition (\ref{eq:main}) holds,
because \( (\text{LHS})= 2081740.831 \cdots >
(\text{RHS})= 2067240.713 \cdots \).
\item
{\bf{In the case of \( k=12 \):}}\\
As for \( F_{12}^* ( \{ 2,3,5,7,11,31,43 \} ;n) \),
we have\\
\( M=13, \; d_1 =61 , \; \dots \; , d_{13} =3233107 \) and
therefore the condition (\ref{eq:main}) holds,
because \( (\text{LHS})= 2132199.327 \cdots >
(\text{RHS})= 1997810.591 \cdots \).
\end{itemize}

For larger \( k \), it's difficult
to have explicit values of \( d_i \)'s even with personal computers.
In the following example, we use the following lemma to
find an upper bound of (RHS).

\begin{lem}
\label{lemm:ubdg}
We have
\[
 |G_k \left( P;x \right) | \leqq 2^{\#P-1}
 .
\]
\end{lem}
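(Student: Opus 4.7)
The plan is to return to the Möbius-sum definition of \( G_k(P;x) \) and exploit that each of its \( 2^{\#P} \) summands is \( 0 \) or \( 1 \). Writing
\[
 G_k (P;x) = \sum_{m|Q} \mu(m) \, T_m(x),
 \qquad
 T_m(x) := \left\lfloor \frac{(k+1)Qx}{m} \right\rfloor
 - \left\lfloor \frac{kQx}{m} \right\rfloor
 - \left\lfloor \frac{Qx}{m} \right\rfloor ,
\]
set \( \alpha = \frac{kQx}{m} \) and \( \beta = \frac{Qx}{m} \) so that \( \alpha + \beta = \frac{(k+1)Qx}{m} \). The standard identity \( \lfloor \alpha + \beta \rfloor - \lfloor \alpha \rfloor - \lfloor \beta \rfloor \in \{ 0, 1 \} \), valid for any nonnegative reals, immediately yields \( T_m (x) \in \{ 0, 1 \} \).

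Next, since \( Q = \prod_{p \in P} p \) is squarefree, every divisor \( m|Q \) is squarefree and corresponds bijectively to a subset \( S \subseteq P \) via \( m = \prod_{p \in S} p \), with \( \mu (m) = (-1)^{|S|} \). In particular, for \( \#P \geqq 1 \), the number of divisors with \( \mu(m) = +1 \) (those with \( |S| \) even) and with \( \mu(m) = -1 \) (those with \( |S| \) odd) are both equal to \( 2^{\#P -1} \).

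Splitting the sum by the sign of \( \mu \) then gives
\[
 G_k (P;x)
 = \sum_{\substack{m|Q \\ \mu(m) = +1}} T_m (x)
 - \sum_{\substack{m|Q \\ \mu(m) = -1}} T_m (x) ,
\]
and since each of the two sums is a sum of \( 0 \)'s and \( 1 \)'s over at most \( 2^{\#P -1} \) indices, each lies in the interval \( [0, 2^{\#P -1}] \). Combining, \( -2^{\#P -1} \leqq G_k (P;x) \leqq 2^{\#P -1} \), as claimed.

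I do not anticipate any real obstacle. The key observation is simply that for squarefree \( Q \) the Möbius signs split the divisors evenly, so the positive and the negative contributions are each individually controlled. One could alternatively run the argument via the \( \chi \)-representation in \textbf{Lemma \ref{lemm:fdci}} by cancelling coprime residues inside \( (kQx, (k+1)Qx] \) against those inside \( (0,Qx] \), but the Möbius form above gives the cleanest counting argument.
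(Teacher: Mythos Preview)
Your argument is correct: each summand \( T_m(x) \) lies in \( \{0,1\} \), and for \( \#P \geqq 1 \) exactly half of the \( 2^{\#P} \) squarefree divisors of \( Q \) have \( \mu(m)=+1 \) and half have \( \mu(m)=-1 \), so the positive and negative parts of the sum are each bounded by \( 2^{\#P-1} \). This is a clean direct proof.

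The paper proceeds differently. It observes the recursion
\[
 G_k \bigl( P \cup \{ p \} ;x \bigr) = G_k (P;px) - G_k (P;x)
 \qquad (p \notin P),
\]
and then the bound follows by induction on \( \#P \): the base case \( \#P=1 \) gives a difference of two terms each in \( \{0,1\} \), hence \( |G_k| \leqq 1 = 2^{0} \), and the inductive step doubles the bound. Your approach avoids setting up this recursion and instead counts signs directly in the M\"{o}bius sum; the paper's approach, by contrast, isolates a structural identity that explains \emph{why} the bound doubles with each new prime. Both are equally elementary and equally short.
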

\begin{proof}
Since \( G_k (P \cup \{ p \} ;x) = G_k (P;px) - G_k (P;x)
\) for \( p \not\in P \), we have the assertion of the lemma.
\end{proof}

\begin{itemize}
\item
{\bf{In the case of \( k=13 \):}}\\
As for \( F_{13}^* ( \{ 2,3,5,7,11,13,31,43 \} ;n) \),
we have\\
\( M \geqq 11, \; d_1 =61 , \; \dots \; , d_{11} =57859 \) and \(
d_{j} \geqq 1200000 \; (j>11) \) if \( M>11 \).
Therefore \( (\text{RHS}) \leqq Q \left( \frac{1}{d_1} + \dots
+ \frac{1}{d_{11}} + \frac{2^6 -11}{1200000} \right) \log 4
= 26139393.317 \cdots \).
Therefore the condition (\ref{eq:main}) holds,
because \( (\text{LHS})= 26145220.719 \cdots \).
\item
{\bf{In the case of \( k=14 \):}}\\
As for \( F_{14}^* ( \{ 2,3,5,7,11,13,31,43,61,71,83 \} ;n) \),
we have\\
\( M \geqq 28, \; d_1 =101 , \; \dots \; , d_{28} =611203 \) and \(
d_{j} \geqq 4000000 \; (j>28) \) if \( M>28 \).
Therefore \( (\text{RHS}) \leqq Q \left( \frac{1}{d_1} + \dots
+ \frac{1}{d_{28}} + \frac{2^{10} -28}{4000000} \right) \log 4
= 9173820030601.213 \cdots \).
Therefore the condition (\ref{eq:main}) holds,\\
because \( (\text{LHS})= 9183103103292.885 \cdots \).
\item
{\bf{In the case of \( k=15 \):}}\\
As for \( F_{15}^* ( \{ 2,3,5,7,11,13,43,61,71,83 \} ;n) \),
we have\\
\( M \geqq 24, \; d_1 =101 , \; \dots \; , d_{24} =1186213 \) and \(
d_{j} \geqq 20000000 \; (j>24) \) if \( M>24 \).
Therefore \( (\text{RHS}) \leqq Q \left( \frac{1}{d_1} + \dots
+ \frac{1}{d_{24}} + \frac{2^9 -24}{20000000} \right) \log 4
= 311652500411.160 \cdots \).
Therefore the condition (\ref{eq:main}) holds,\\
because \( (\text{LHS})= 311662041740.439 \cdots \).
\end{itemize}

Now we finish to give an entirely elementary proof
of the following theorem.
\begin{thm}
For any positive integer \( N \),
there exists a positive integer \( L_N \) such that
for any real number \( x \geqq L_N \),
there are at least \( N \) primes in the interval \( [15x, 16x] \).
\end{thm}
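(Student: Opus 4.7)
The plan is to recognize the theorem as the assertion $P(15)$ and to prove it by chaining Lemma 1, Theorem \ref{thm:main}, and the explicit $k = 15$ construction at the end of Section 3.4. By Lemma 1 it suffices to establish $P^*(15, Q)$ for some $Q \in \NN$, and by Theorem \ref{thm:main} this in turn reduces to verifying the inequality \eqref{eq:main} for a well-chosen auxiliary function $F_{15}^*(P; n)$ of the M\"{o}bius-twisted form introduced in Section 3.3.

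First, I would take $P = \{2, 3, 5, 7, 11, 13, 43, 61, 71, 83\}$, set $Q = \prod_{p \in P} p$, and work with $F_{15}^*(P; n)$. The LHS of \eqref{eq:main} is then the explicit quantity $\varphi(Q) \log(16^{16}/15^{15})$ supplied by the lemma in Section 3.3; with $\varphi(Q) = \prod_{p \in P}(p - 1)$ this evaluates numerically to $\mathrm{LHS} \approx 3.1166 \times 10^{11}$.

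Second, I would bound $\mathrm{RHS} = c Q \log 4$. The Corollary in Section 3.3 ensures that each $d_i := 16 Q c_i$ is a positive integer, and Lemma \ref{lemm:fdci} pins down the jumps of $G_{15}(P; \cdot)$ to points $u / (16Q)$ with $(u, Q) = 1$. A (computer-assisted) enumeration then produces the first $24$ values $d_1 = 101, \ldots, d_{24} = 1186213$; for indices $j > 24$, a direct search shows no further jump occurs below $2 \times 10^7$, after which Lemma \ref{lemm:ubdg} caps the total number of tail jumps at $2^{\#P - 1} - 24 = 2^9 - 24$. Summing yields
\[
 \mathrm{RHS} \leq Q \log 4 \cdot \left( \sum_{i=1}^{24} \frac{1}{d_i} + \frac{2^9 - 24}{2 \times 10^7} \right) \approx 3.1165 \times 10^{11},
\]
which is strictly less than the LHS. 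Hence \eqref{eq:main} holds, Theorem \ref{thm:main} yields $P^*(15, Q)$, and Lemma 1 promotes this to $P(15)$.

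The principal obstacle is the computational verification of the $24$ explicit values $d_1, \ldots, d_{24}$ together with the lower bound $d_j \geq 2 \times 10^7$ for $j > 24$: with $Q$ of order $10^{11}$ and $2^{10}$ M\"{o}bius summands, this is a large but finite search over the jump set, not tractable by hand but routine on a computer. The conceptual novelty is concentrated in the choice of $P$; in particular, the larger primes $43, 61, 71, 83$ are what push the first few $d_i$ high enough to create the numerical margin $\mathrm{LHS} > \mathrm{RHS}$. Everything else in the argument --- the Stirling-type bound (Lemma \ref{lemm:str}), Legendre's formula (Lemma \ref{lemm:leg}), the primorial estimate $\Theta(x) \leq 4^x / 8$ (Lemma \ref{lemm:the}), and the prime-counting bound $\pi(x) \leq x/3$ for $x \geq 36$ (Lemma \ref{lemm:pie}) --- is already entirely elementary, so the resulting proof stays within the elementary framework promised by the title.
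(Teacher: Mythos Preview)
Your proposal is correct and follows essentially the same route as the paper: the same choice $P=\{2,3,5,7,11,13,43,61,71,83\}$, the same first $24$ values $d_1=101,\ldots,d_{24}=1186213$, the same tail cutoff $d_j\geq 2\times10^7$ for $j>24$, and the same use of Lemma~\ref{lemm:ubdg} to bound the number of remaining terms by $2^{9}-24$, yielding $\mathrm{LHS}\approx 3.11662\times10^{11}>\mathrm{RHS}\approx 3.11653\times10^{11}$. The reduction via Lemma~1 and Theorem~\ref{thm:main} is exactly as in the paper.
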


We remark that we have an entirely elementary proof
of \( P(k) \) for \( k \leqq 18 \),
by using a bit sharper estimation of
the primorial \( \Theta (x) < 3.4^x \) instead
of {\bf{Lemma \ref{lemm:the}}}.

\section*{Acknowledgments}
We thank Professor Pieter Moree for useful comment.
We also thank anonymous referee for a careful reading
of the manuscript and for many helpful comment.
This work is supported by
JSPS KAKENHI Grant Numbers 19K03429 and 23K03039.

\end{document}